\documentclass[11pt]{amsart}

\usepackage{microtype}
\usepackage{amsmath,amssymb,amsthm}

\newcommand{\Z}{\mathbb{Z}}

\newcommand{\abs}[1]{\lvert#1\rvert}
\newcommand{\one}{\mathbf{1}}

\numberwithin{equation}{section}

\theoremstyle{plain}
\newtheorem{theorem}{Theorem}
\newtheorem{corollary}[theorem]{Corollary}
\newtheorem{proposition}[theorem]{Proposition}
\newtheorem{lemma}[theorem]{Lemma}
\theoremstyle{remark}

\title{A Fourier-Free Density-Increment Proof of Roth's Theorem}
\author{Mark Lewko}
\date{}

\begin{document}

\begin{abstract}
We give an elementary, Fourier-free proof of Roth's theorem. The proof
follows Roth's original density-increment strategy, but replaces the usual
Fourier-analytic step with a direct combinatorial argument involving averages
over sub-progressions.
\end{abstract}

\maketitle

%% -------------------------------------------------------------------
\section{Introduction}
%% -------------------------------------------------------------------

In 1953, Roth proved that every subset of the integers of positive upper density
contains a nontrivial three-term arithmetic progression. This can be stated in
the following finite form.

\begin{theorem}[Roth \cite{Roth53}]\label{thm:roth}
For every $\alpha>0$, if $N$ is sufficiently large, then every set
$A\subset\{0,1,\ldots,N-1\}$ with $\abs{A}\geq\alpha N$ contains a
three-term arithmetic progression $x,\,x+r,\,x+2r$ with $r\neq 0$.
\end{theorem}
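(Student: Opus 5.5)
We follow Roth's density-increment strategy, with the Fourier step replaced by a direct averaging argument over sub-progressions. The target is the following \emph{increment lemma}. Suppose $A\subset P$, where $P$ is an arithmetic progression of length $N$, and $\abs{A}=\alpha\abs{P}$. Suppose also that $A$ has no nontrivial three-term progression. Then either $N\le C\alpha^{-C}$, or there is a sub-progression $P'\subset P$ with $\abs{P'}\ge c\,\alpha^{C}N^{c}$ on which the relative density of $A$ is at least $\alpha+c\alpha^{C}$.

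Theorem~\ref{thm:roth} follows by iteration. Restricting $A$ to $P'$ and rescaling preserves the property of having no three-term progressions. The density can increase by $c\alpha^C$ at most $O(\alpha^{-C-1})$ times before exceeding $1$, which is impossible. The length stays above the threshold $C\alpha^{-C}$ throughout, provided $N$ is large enough in terms of $\alpha$ (tower-free, roughly doubly exponential in a power of $1/\alpha^{-1}$).

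To prove the increment lemma without Fourier analysis, we proceed in three steps.

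\begin{enumerate}
\item \textbf{Count progressions with density weights.} If $A$ is roughly uniformly distributed on every progression of length $L=\lfloor N^{c}\rfloor$ and common difference $d\le N^{c}$, we show the number of three-term progressions in $A$ is about $\alpha^3N^2/4$. This contradicts having only the trivial ones ($\abs{A}\ll\alpha^3N^2$) once $N\gg\alpha^{-3}$.

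\item \textbf{Rewrite the count locally.} Let $f=\one_A-\alpha\one_P$ be the balanced function. The progression count splits into $\alpha^3\#\{\text{3APs in }P\}$ plus terms involving $f$. For each such term we average over short progressions $\{x+jd:0\le j<L\}$. Since the 3AP count is invariant under this averaging, we can replace the pair $(x,r)$ by $(x+jd,\,r+kd)$, averaged over $j,k$. The cross terms then become averages of $f$ over progressions $Q$ of length $L$ and difference $d$, times $\one_A$ values.

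\item \textbf{Pigeonhole to a good sub-progression.} If every such local average satisfies $\mathbb{E}_Q f\le\eta\alpha$ (with $\eta=c\alpha^{C}$), we use the mean-zero property of $f$: negative deviations force positive ones elsewhere. It follows that $\abs{\mathbb{E}_Qf}$ is small on average. By Cauchy--Schwarz this bounds each error term by $O(\eta^{1/2})N^2$, which contradicts step 1. So some $Q$ has $\mathbb{E}_Q\one_A\ge\alpha+\eta$, and we take $P'=Q$.
\end{enumerate}

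The main obstacle is step 2. Without Fourier analysis, we must show that a term like $\sum_{x,r}f(x)\one_A(x+r)\one_A(x+2r)$ is controlled by averages of $f$ over short progressions. This requires that at least one function in each trilinear term be replaced by its local averages. The method is to average over the shift $r\mapsto r+kd$ for $k<L$. Because $x+2r$ then moves with step $2d$, the progression structure is preserved up to boundary errors of size $O(LdN)$, which is acceptable when $Ld\le N^{1-c}$.

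What is genuinely hard is that smoothing only one variable is not enough. The honest local version of the count must pass from the trilinear form to a ``$U^2$-type'' quantity built only from local averages. The way to do this without characters is by a Dirichlet-type pigeonhole argument, choosing $d$ depending on $r$ so that $rd$ modulo the scale is small. This is exactly where Fourier analysis usually enters, and it is the step most likely to need the longest argument.
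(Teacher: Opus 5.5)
Your iteration scheme is sound and matches the paper's. The problem is the increment lemma: it carries all the content, it is not proved, and the mechanism you sketch for it does not work.

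In Step~2, the substitution $(x,r)\mapsto(x+jd,\,r+kd)$ sends the three terms to $x+jd$, $x+r+(j+k)d$ and $x+2r+(j+2k)d$. The two $\one_A$-slots stay fixed only if $j=k=0$. A three-term progression is determined by any two of its terms, so no averaging of this kind can smooth the $f$-slot while freezing the other two. The cross term therefore never becomes ``(average of $f$ on $Q$)$\times$($\one_A$ values)''. The Cauchy--Schwarz bound $O(\eta^{1/2})N^2$ in Step~3 is then just the unproved inverse statement (small progression averages $\Rightarrow$ small trilinear error) restated. Your last paragraph concedes this, but ``a Dirichlet-type pigeonhole choosing $d$ depending on $r$'' points to where the difficulty lies; it is not an argument.

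The paper fills exactly this hole in three moves.

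\begin{enumerate}
\item Two Cauchy--Schwarz steps bound each telescoped error term $\Lambda(f,u,u)$, $\Lambda(v,f,u)$, $\Lambda(v,v,f)$ by $m^{5/4}\mathcal E(f)^{1/4}$. Here $\mathcal E(f)=\sum_tR(t)^2$ and $R(t)=\sum_xf(x)f(x+t)$ in $\Z_m$ with $m$ prime. So 3AP-freeness forces $\mathcal E(f)\gtrsim\alpha^{12}m^3$.
\item For modular progression sums $S_d$ of length $\ell\sim\alpha^{10}m$, one has $V_d=\sum_x S_d(x)^2=\sum_{|h|<\ell}(\ell-|h|)R(hd)$. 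Summing over \emph{all} $d\in\Z_m$, mean zero gives $\sum_dV_d=\ell mR(0)\le\alpha\ell m^2$. In $\sum_dV_d^2$, the positivity $\sum_dR(hd)R(kd)\ge0$ (a sum-of-squares identity) lets one discard the off-diagonal terms. Dilation invariance gives $\sum_dR(hd)^2=\mathcal E(f)$ for $h\ne0$, so $\sum_dV_d^2\gtrsim\ell^3\mathcal E(f)$, and the $d=0$ term is negligible. Comparing the two moments yields one nonzero step $d$ with $V_d$ large. Mean zero then yields an $x$ with $S_d(x)\gtrsim\alpha^{11}\ell$.
\item Only at the end is Dirichlet used, and only on that single $d$. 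Some $q\le\sqrt\ell$ has $qd$ represented by a small integer. Cutting into blocks of length $\sim\sqrt\ell$ gives a genuine progression in $[N]$ carrying the increment.
\end{enumerate}

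The missing ideas are the reduction to the energy $\mathcal E(f)$, the averaging over all differences rather than a fixed or $r$-dependent one, and the positivity lemma. Without something playing their role, your outline does not reach the increment lemma.
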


Roth's theorem is a foundational result in density Ramsey theory; its extension
to longer arithmetic progressions is a celebrated theorem of Szemer\'edi ~\cite{Szemeredi75}.
Proofs of Roth's theorem have been given using quite different methods: Roth's original
proof is Fourier analytic~\cite{Roth53}; Furstenberg proved Szemer\'edi's
theorem in full using ergodic-theoretic methods~\cite{Furstenberg77}; and graph-theoretic
proofs of Roth's theorem follow from the triangle removal lemma of Ruzsa and
Szemer\'edi~\cite{RS78}. More combinatorial proofs have also been given using Szemer\'edi's cube
lemma~\cite{GRS} as well as using ideas related to the more general Hales--Jewett theorem~\cite{DHJ}. There is
also a rich history of quantitative improvements, which we do not attempt to
summarize here; see~\cite{Green,P} for recent surveys.

Roth's original proof uses a density-increment argument. Starting with a set
$A\subset[N]$ with no nontrivial three-term progressions, one proves that $A$
has increased density on a long subprogression. Passing to that progression
and rescaling, one repeats the argument. If the original interval is sufficiently
long, the density would eventually have to exceed~$1$, providing a contradiction.
The purpose of this note is to give a proof with the same density-increment
structure, but without using Fourier analysis. Instead of detecting a large
Fourier coefficient, we estimate certain averages over subprogressions directly.

The combinatorial approaches to Roth's theorem have typically produced very weak
quantitative bounds. The argument here stays close to Roth's density-increment
framework and gives a bound of the form
$$
        r_3(N)\lesssim \frac{N}{(\log\log N)^c}
$$
with $c=1/11$. This can be improved somewhat at the expense of complicating the exposition, but the argument in its current form does not seem capable of recovering the exponent $c=1$ obtained by Roth's original argument.

%% -------------------------------------------------------------------
\section{Notation}
%% -------------------------------------------------------------------

We write $A\lesssim B$ to mean $A\leq CB$ for a positive absolute constant
$C$, and similarly $A\gtrsim B$. We write $A\sim B$ when both $A\lesssim B$
and $B\lesssim A$ hold. All implicit constants are absolute. For a prime
$m$, write $\Z_m=\Z/m\Z$ and $\Z_m^\times=\Z_m\setminus\{0\}$.

%% -------------------------------------------------------------------
\section{Preliminaries}
%% -------------------------------------------------------------------

Write $[N]=\{0,\ldots,N-1\}$ and choose a prime $m$ with $4N<m<8N$, which we
may do by Bertrand's postulate. We embed $[N]$ in $\Z_m$ in the obvious way.
The point of choosing $m>2N$ is that a three-term progression in $\Z_m$ whose
terms all lie in $[N]$ is a genuine integer progression.

Let $\alpha=\abs{A}/N$ be the density of $A$ in $[N]$, and define
$f=\one_A-\alpha\one_{[N]}$ on $\Z_m$. Then $f$ has mean zero on $\Z_m$,
satisfies $-\alpha\leq f\leq 1$, and is supported on $[N]$. For any function
$g\colon\Z_m\to\mathbb R$, set
$$
\mathcal E(g)=\sum_{t\in\Z_m}\left(\sum_{x\in\Z_m}g(x)g(x+t)\right)^2.
$$
For the balanced function $f$, write $R(t)=\sum_{x\in\Z_m}f(x)f(x+t)$, so
$\mathcal E(f)=\sum_{t\in\Z_m}R(t)^2$. We will use the following
non-negativity property.

\begin{lemma}\label{lem:pos}
For any $h,k\in\Z_m^\times$, one has
\begin{equation}\label{eq:positivity}
\sum_{d\in\Z_m}R(hd)R(kd)\geq 0.
\end{equation}
\end{lemma}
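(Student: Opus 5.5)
The plan is to prove \eqref{eq:positivity} by writing the left-hand side as a sum of squares. The idea is to absorb the dilations by $h$ and $k$ into the function $f$ itself and then use a rearrangement identity for autocorrelations. Everything stays finite and combinatorial, with no Fourier analysis.

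\emph{Step 1: dilation.} Since $m$ is prime and $h\in\Z_m^\times$, the map $u\mapsto hu$ is a bijection of $\Z_m$. Define $a(u)=f(hu)$ and $b(u)=f(ku)$. Substituting $x=hu$ in the definition of $R$ gives
$$
R(hd)=\sum_{x\in\Z_m}f(x)f(x+hd)=\sum_{u\in\Z_m}f(hu)f(hu+hd)=\sum_{u}a(u)a(u+d).
$$
Similarly $R(kd)=\sum_{v}b(v)b(v+d)$. So the left side of \eqref{eq:positivity} equals
$$
S=\sum_{d,u,v\in\Z_m}a(u)a(u+d)\,b(v)b(v+d).
$$

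\emph{Step 2: rearrangement into squares.} Define the cross-correlation $C(s)=\sum_{x\in\Z_m}a(x)b(x+s)$. Expanding the square gives
$$
\sum_{s\in\Z_m}C(s)^2=\sum_{s,x,x'}a(x)b(x+s)\,a(x')b(x'+s).
$$
Now change variables by $x'=x+d$ and $v=x+s$. The map $(s,x,x')\mapsto(d,x,v)$ is a bijection of $\Z_m^3$. Under it the summand becomes $a(x)a(x+d)b(v)b(v+d)$, which is exactly the summand of $S$ after renaming $u=x$. Hence $S=\sum_s C(s)^2\ge 0$, which proves the lemma.

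The only step needing care is the bookkeeping of this change of variables: one must check that the pairing really matches $a(x)$ with $a(x+d)$ and $b(v)$ with $b(v+d)$ using the same shift $d$. Everything else is routine. The invertibility of $h$ and $k$ modulo the prime $m$ is essential, since it is what makes Step 1 a bijective substitution. Note also that neither the mean-zero property of $f$ nor its support was used, so the argument works for any real $f$.
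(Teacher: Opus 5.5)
Your proof is correct and is essentially the paper's argument: both show the sum equals $\sum(\text{cross-correlation of } f \text{ with a dilate of } f)^2$, and your bijection $(s,x,x')\mapsto(x'-x,\,x,\,x+s)$ checks out. The only difference is bookkeeping. You dilate first via $a(u)=f(hu)$ and $b(u)=f(ku)$, whereas the paper reduces to $\sum_t R(t)R(\lambda t)$ with $\lambda=kh^{-1}$ and performs a dilated change of variables. Indeed, your $C(s)=\sum_y f(y)f(\lambda y+ks)$ is exactly the paper's inner sum $\sum_a f(a)f(u+\lambda a)$ at $u=ks$.
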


\begin{proof}
Let $\lambda=kh^{-1}$. Using $t=hd$, we have
\[
\sum_{d\in\Z_m}R(hd)R(kd)=\sum_{t\in\Z_m}R(t)R(\lambda t)
=\sum_{a,b,t\in\Z_m} f(a)f(a+t)f(b)f(b+\lambda t).
\]
To make the non-negativity explicit, use the bijective change of variables
$(a,b,t)\mapsto(a,c,u)$ given by $c=a+t$ and $u=b-\lambda a$. Then
$b=u+\lambda a$ and $b+\lambda t=u+\lambda c$, so the last sum becomes
$$
\sum_{u,a,c\in\Z_m} f(a)f(c)f(u+\lambda a)f(u+\lambda c)
=\sum_{u\in\Z_m}\left(\sum_{a\in\Z_m}f(a)f(u+\lambda a)\right)^2\geq 0.
$$
\end{proof}

%% -------------------------------------------------------------------
\section{No progressions imply large energy}
%% -------------------------------------------------------------------

For functions $f_1,f_2,f_3\colon\Z_m\to\mathbb R$, define
$$
\Lambda(f_1,f_2,f_3):=\sum_{x,r\in\Z_m} f_1(x)f_2(x+r)f_3(x+2r).
$$
If $A\subset [N]\subset\Z_m$, then $\Lambda(\one_A,\one_A,\one_A)$ is the
number of three-term arithmetic progressions in $A$, including the trivial
progressions with $r=0$. Hence, if $A$ contains no nontrivial three-term
progressions, then $\Lambda(\one_A,\one_A,\one_A)=\abs{A}$. On the other hand,
$\Lambda(\one_{[N]},\one_{[N]},\one_{[N]})\gtrsim N^2$, since $[N]$ contains
$\gtrsim N^2$ three-term progressions.

\begin{lemma}\label{lem:lambda-energy}
Let $g_1,g_2,g_3\colon\Z_m\to\mathbb R$ be bounded in absolute value by an
absolute constant. Then
\[
        \abs{\Lambda(g_1,g_2,g_3)}^4
        \lesssim m^5\min_{1\leq i\leq3}\mathcal E(g_i).
\]
\end{lemma}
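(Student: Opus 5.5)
Two applications of Cauchy--Schwarz remove the other two functions, and what remains is a sum that is exactly $\mathcal E(g_i)$ after a linear change of variables. Because $m$ is an odd prime, $2$ is invertible in $\Z_m$. This lets us treat the three positions symmetrically: each of $x$, $x+r$, $x+2r$ can be taken as the distinguished variable by reparametrizing. Write $B$ for the absolute bound on the $g_j$.

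\emph{Case $i=3$.} Substitute $y=x+r$, so the progression is $(x,y,2y-x)$ and
\[
\Lambda=\sum_{x}g_1(x)\sum_y g_2(y)g_3(2y-x).
\]
Cauchy--Schwarz in $x$, using $\sum_x g_1(x)^2\le B^2m$, gives
\[
\Lambda^2\le B^2 m\sum_x\Bigl(\sum_y g_2(y)g_3(2y-x)\Bigr)^2=B^2m\sum_{y,y'}g_2(y)g_2(y')\sum_x g_3(2y-x)g_3(2y'-x).
\]
Put $s=y'-y$ and $z=2y-x$. The innermost sum becomes $\sum_z g_3(z)g_3(z+2s)$, which depends only on $s$; call it $S_3(2s)$, where $S_j(t)=\sum_z g_j(z)g_j(z+t)$. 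Summing over $y$ with $s$ fixed produces $S_2(s)$. Hence
\[
\Lambda^2\le B^2m\sum_s S_2(s)S_3(2s).
\]
Apply Cauchy--Schwarz in $s$ and use $\abs{S_2(s)}\le B^2m$:
\[
\Bigl(\sum_s S_2S_3(2s)\Bigr)^2\le \sum_s S_2(s)^2\sum_s S_3(2s)^2\le m\,(B^2m)^2\,\mathcal E(g_3).
\]
The second step also uses that $s\mapsto 2s$ is a bijection, so $\sum_s S_3(2s)^2=\mathcal E(g_3)$. Combining, $\Lambda^4\le B^4m^2\cdot B^4m^3\,\mathcal E(g_3)\lesssim m^5\mathcal E(g_3)$.

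\emph{Cases $i=1,2$.} The same argument works with the roles permuted.
\begin{itemize}
\item For $i=1$, reflect $r\mapsto -r$ after shifting, writing the progression as $(2y-z,y,z)$. Cauchy--Schwarz is then applied in $z$, with $g_3$ playing the role $g_1$ had above.
\item For $i=2$, write the progression as $(x,(x+z)/2,z)$ using the inverse of $2$. Cauchy--Schwarz in $x$ gives $\sum_{z,z'}g_3(z)g_3(z')\sum_x g_2((x+z)/2)g_2((x+z')/2)$. The inner sum depends only on $z'-z$: it equals $\sum_w g_2(w)g_2(w+(z'-z)/2)=S_2((z'-z)/2)$. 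The second Cauchy--Schwarz then yields $\sum_s S_3(s)^2\sum_s S_2(s/2)^2\le m(B^2m)^2\mathcal E(g_2)$.
\end{itemize}
In each case the bound $\abs{\Lambda}^4\lesssim m^5\mathcal E(g_i)$ follows. Taking the best of the three gives the minimum in Lemma~\ref{lem:lambda-energy}.

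\emph{Potential obstacles.} The main place to be careful is the change of variables: after the first Cauchy--Schwarz, the inner sum must depend on $(y,y')$ only through the difference $y'-y$, up to multiplication by the unit $2$ or $2^{-1}$. The power count also needs checking. There are two factors of $m$ from bounding $\sum g_j^2$ by $B^2m$ (one in each Cauchy--Schwarz) and two from the trivial bound $\abs{S_j}\le B^2m$ (entering squared). Together these give exactly $m^5$ against one copy of $\mathcal E$. No positivity input such as Lemma~\ref{lem:pos} is needed here.
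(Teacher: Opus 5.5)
Your proof is correct and follows essentially the same route as the paper. You apply Cauchy--Schwarz in one variable to remove one function, expand so the other two sums become autocorrelations at shifts related by a unit ($2$ or $2^{-1}$), then apply Cauchy--Schwarz in the shift, bounding one autocorrelation trivially. The only difference is packaging: the paper proves a single estimate for $\sum_{y,r}F(y)h_1(y+ar)h_2(y+br)$ and specializes $(a,b)$, while you handle the three positions by explicit reparametrizations.
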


\begin{proof}
It suffices to prove the following estimate. Let $F,h_1,h_2\colon\Z_m\to\mathbb R$,
with $h_1,h_2$ bounded, and let $a,b\in\Z_m^\times$ with $a\neq b$. Then
\[
        \left|\sum_{y,r\in\Z_m}F(y)h_1(y+ar)h_2(y+br)\right|^4
        \lesssim m^5\mathcal E(F).
\]
The three possible choices of the distinguished slot in $\Lambda$ are obtained,
after a change of variables, from this estimate by taking $(a,b)=(1,2)$,
$(-1,1)$, and $(-2,-1)$.

Let $T$ denote the sum inside the fourth-power. Put $\lambda=ba^{-1}$ and
change variables from $r$ to $u=y+ar$. Then
\[
T=\sum_{y,u\in\Z_m}F(y)h_1(u)h_2((1-\lambda)y+\lambda u).
\]
By Cauchy--Schwarz in $u$,
\[
\abs{T}^2\lesssim m\sum_{u\in\Z_m}
\left|\sum_{y\in\Z_m}F(y)h_2((1-\lambda)y+\lambda u)\right|^2.
\]
Expanding the square, then putting $t=y'-y$ and $z=(1-\lambda)y+\lambda u$, gives
\[
\begin{aligned}
\abs{T}^2
&\lesssim m\sum_{t\in\Z_m}
\left(\sum_{y\in\Z_m}F(y)F(y+t)\right) \\
&\qquad\qquad\times
\left(\sum_{z\in\Z_m}h_2(z)h_2(z+(1-\lambda)t)\right).
\end{aligned}
\]
A final Cauchy--Schwarz in $t$ gives
\[
\begin{aligned}
\abs{T}^2
&\lesssim m\mathcal E(F)^{1/2}
\left(\sum_{t\in\Z_m}
\left(\sum_{z\in\Z_m}h_2(z)h_2(z+(1-\lambda)t)\right)^2
\right)^{1/2} \\
&\lesssim m^{5/2}\mathcal E(F)^{1/2},
\end{aligned}
\]
using that $|h_2|\leq 1$. Squaring proves the estimate. Applying this with each
$g_i$ as the distinguished function gives the minimum on the right.
\end{proof}

\begin{lemma}[Energy controls the trilinear operator]\label{lem:trilinear-discrepancy}
Let $u,v\colon\Z_m\to\mathbb R$ be bounded in absolute value by an absolute
constant, and put $f=u-v$. Then
\[
        \abs{\Lambda(u,u,u)-\Lambda(v,v,v)}^4\lesssim m^5\mathcal E(f).
\]
\end{lemma}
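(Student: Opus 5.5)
We use the standard telescoping decomposition of the difference of trilinear forms, combined with Lemma~\ref{lem:lambda-energy}. Since $\Lambda$ is linear in each argument, writing $u=v+f$ gives
\[
\Lambda(u,u,u)-\Lambda(v,v,v)=\Lambda(f,u,u)+\Lambda(v,f,u)+\Lambda(v,v,f).
\]
This identity is checked by expanding: $\Lambda(u,u,u)-\Lambda(v,u,u)=\Lambda(f,u,u)$, then $\Lambda(v,u,u)-\Lambda(v,v,u)=\Lambda(v,f,u)$, and finally $\Lambda(v,v,u)-\Lambda(v,v,v)=\Lambda(v,v,f)$. Each of the three terms has $f$ in exactly one slot, and the other two slots contain functions bounded by an absolute constant, namely $u$ or $v$.

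Next we bound each term by Lemma~\ref{lem:lambda-energy}. The function $f=u-v$ is bounded by twice the absolute constant bounding $u$ and $v$, so all three functions in each term satisfy the hypotheses. We choose the index $i$ in the minimum to be the slot containing $f$. This gives $\abs{\Lambda(f,u,u)}^4\lesssim m^5\mathcal E(f)$, and the same bound holds for the other two terms.

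Finally, combine the three bounds using $\abs{a+b+c}^4\leq 27(\abs a^4+\abs b^4+\abs c^4)$, which follows from convexity of $x\mapsto x^4$. This yields
\[
\abs{\Lambda(u,u,u)-\Lambda(v,v,v)}^4\lesssim m^5\mathcal E(f),
\]
with the absolute constant absorbing the factor $27$.

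There is no real obstacle. The one point to check is that Lemma~\ref{lem:lambda-energy} is applied with the minimum taken at the slot holding $f$, since only the energy of $f$ may appear on the right. The lemma controls the form by the energy of any chosen slot, so this is allowed, and the bounded functions $u,v$ in the remaining slots contribute only absolute constants.
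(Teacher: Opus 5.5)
Your proposal is correct and follows the paper's proof: you use the same telescoping identity $\Lambda(f,u,u)+\Lambda(v,f,u)+\Lambda(v,v,f)$ and apply Lemma~\ref{lem:lambda-energy} with $f$ in the distinguished slot of each term. The only difference is cosmetic: you make the combining constant $27$ explicit, while the paper sums the three bounds of size $m^{5/4}\mathcal E(f)^{1/4}$ and then raises to the fourth power.
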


\begin{proof}
We telescope one slot at a time:
\[
\begin{aligned}
\Lambda(u,u,u)-\Lambda(v,v,v)
&=\bigl(\Lambda(u,u,u)-\Lambda(v,u,u)\bigr)
 +\bigl(\Lambda(v,u,u)-\Lambda(v,v,u)\bigr)  \\
&\qquad
 +\bigl(\Lambda(v,v,u)-\Lambda(v,v,v)\bigr)  \\
&=\Lambda(f,u,u)+\Lambda(v,f,u)+\Lambda(v,v,f).
\end{aligned}
\]
Each term is bounded by $m^{5/4}\mathcal E(f)^{1/4}$ by
Lemma~\ref{lem:lambda-energy}, since in each term the other two factors are
bounded. The claim follows after raising to the fourth power.
\end{proof}

\begin{lemma}[Lack of progressions implies large energy]\label{lem:energy}
If $A\subset[N]$ is 3AP-free and $N\gtrsim\alpha^{-3}$, then
$$
\mathcal E(f)\gtrsim\alpha^{12}m^3.
$$
\end{lemma}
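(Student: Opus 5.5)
We apply Lemma~\ref{lem:trilinear-discrepancy} with $u=\one_A$ and $v=\alpha\one_{[N]}$, so that $u-v=f$. Both functions are bounded by $1$ in absolute value, so the lemma gives
\[
\abs{\Lambda(\one_A,\one_A,\one_A)-\alpha^3\Lambda(\one_{[N]},\one_{[N]},\one_{[N]})}^4\lesssim m^5\mathcal E(f),
\]
where we used trilinearity to write $\Lambda(v,v,v)=\alpha^3\Lambda(\one_{[N]},\one_{[N]},\one_{[N]})$. It then remains to bound the left side from below by a quantity of order $(\alpha^3N^2)^4$.

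For the two terms we use the observations made before Lemma~\ref{lem:lambda-energy}. Since $m>2N$, every progression in $\Z_m$ with all terms in $[N]$ is a genuine integer progression. Because $A$ is 3AP-free, only the trivial progressions survive, so $\Lambda(\one_A,\one_A,\one_A)=\abs{A}\leq N$. On the other hand, $\Lambda(\one_{[N]},\one_{[N]},\one_{[N]})\geq cN^2$ for an absolute constant $c>0$; for instance, all $x\in[0,N/3)$ and $r\in[0,N/3)$ give progressions inside $[N]$, so one may take $c=1/9$ up to rounding. The difference is therefore at least $c\alpha^3N^2-N$.

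This is where the hypothesis $N\gtrsim\alpha^{-3}$ enters. With a sufficiently large implicit constant, say $N\geq 2c^{-1}\alpha^{-3}$, we get $N\leq \tfrac{c}{2}\alpha^3N^2$, so the difference is at least $\tfrac{c}{2}\alpha^3N^2\gtrsim\alpha^3N^2$. Raising to the fourth power gives $\alpha^{12}N^8\lesssim m^5\mathcal E(f)$. Since $m\sim N$ (recall $4N<m<8N$), we have $N^8\gtrsim m^8$, and hence $\mathcal E(f)\gtrsim\alpha^{12}m^3$.

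There is no serious obstacle. The only points needing care are that the trivial progressions contribute exactly $\abs{A}$ and not more, which relies on $m$ being prime and larger than $2N$, and choosing the implicit constant in $N\gtrsim\alpha^{-3}$ so that this trivial contribution is absorbed by the main term.
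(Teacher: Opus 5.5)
Your proof is correct and follows the paper's argument essentially verbatim: you apply Lemma~\ref{lem:trilinear-discrepancy} with $u=\one_A$ and $v=\alpha\one_{[N]}$, compare $\Lambda(u,u,u)=\abs{A}$ with $\Lambda(v,v,v)\gtrsim\alpha^3N^2$, absorb the trivial term using $N\gtrsim\alpha^{-3}$, and pass from $N^8$ to $m^8$ via $m\sim N$. (A small remark: the exact count of $\abs{A}$ for the trivial progressions only uses $m>2N$, since $2y\equiv x+z \pmod m$ then forces $2y=x+z$; primality of $m$ plays no role in this step.)
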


\begin{proof}
Put $u=\one_A$ and $v=\alpha\one_{[N]}$. Then $f=u-v$ and
$\abs{u},\abs{v}\leq1$. Since $A$ has no nontrivial three-term progressions,
$\Lambda(u,u,u)=\abs{A}=\alpha N$, while
$\Lambda(v,v,v)=\alpha^3\Lambda(\one_{[N]},\one_{[N]},\one_{[N]})\gtrsim\alpha^3N^2$.
If $N\gtrsim\alpha^{-3}$, then
$\abs{\Lambda(u,u,u)-\Lambda(v,v,v)}\gtrsim\alpha^3N^2$.
Lemma~\ref{lem:trilinear-discrepancy} therefore gives
$\alpha^{12}N^8\lesssim m^5\mathcal E(f)$. Since $m\sim N$, this gives
$\mathcal E(f)\gtrsim\alpha^{12}m^3$.
\end{proof}

%% -------------------------------------------------------------------
\section{Large energy implies a modular density increment}
%% -------------------------------------------------------------------

Our next goal is to find a long modular progression on which $f$ has positive
average. For an integer $1\leq \ell<m/2$ and a step $d\in\Z_m$, set
$$
S_d(x):=\sum_{i=0}^{\ell-1}f(x+id),\qquad V_d:=\sum_{x\in\Z_m}\abs{S_d(x)}^2.
$$
Thus $S_d(x)$ is the sum of $f$ on the length-$\ell$ modular progression
starting at $x$ with step $d$, and $V_d$ is the second moment of these sums
as $x$ varies.

\begin{lemma}[Moments of the progression sums]\label{lem:v-identity}
For every $d\in\Z_m$,
\begin{equation}\label{eq:v-identity}
V_d=\sum_{\abs{h}<\ell}(\ell-\abs{h})R(hd).
\end{equation}
Also
\begin{equation}\label{eq:v-first-moment}
\sum_{d\in\Z_m}V_d\leq \alpha\ell m^2.
\end{equation}
Finally, if $\mathcal E(f)\geq\beta m^3$ and $\ell\lesssim \beta m/\alpha^2$, then
\begin{equation}\label{eq:v-second-moment-nonzero}
\sum_{d\in\Z_m^\times}V_d^2\gtrsim \beta\ell^3m^3.
\end{equation}
\end{lemma}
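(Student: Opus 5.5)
The plan is to prove the three claims in order: \eqref{eq:v-identity} by expanding the square, \eqref{eq:v-first-moment} from \eqref{eq:v-identity} and the mean-zero property of $f$, and \eqref{eq:v-second-moment-nonzero} from \eqref{eq:v-identity} together with Lemma~\ref{lem:pos}.

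\emph{The identity \eqref{eq:v-identity}.} Expand the square:
\[
V_d=\sum_{x}\sum_{i,j=0}^{\ell-1}f(x+id)f(x+jd)=\sum_{i,j=0}^{\ell-1}R((j-i)d).
\]
Here we substitute $x\mapsto x-id$ and use $R(t)=\sum_x f(x)f(x+t)$. For $\abs{h}<\ell$, the number of pairs $(i,j)$ with $j-i=h$ is $\ell-\abs{h}$, which gives \eqref{eq:v-identity}.

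\emph{The first moment \eqref{eq:v-first-moment}.} Sum \eqref{eq:v-identity} over all $d\in\Z_m$.
\begin{itemize}
\item For $0<\abs{h}<\ell<m/2$, $h$ is invertible mod $m$. Hence $\sum_d R(hd)=\sum_t R(t)=\bigl(\sum_x f(x)\bigr)^2=0$, since $f$ has mean zero.
\item Only $h=0$ survives, contributing $\ell mR(0)$.
\item We have $R(0)=\sum f^2=\abs{A}(1-\alpha)^2+(N-\abs{A})\alpha^2=\alpha(1-\alpha)N\leq\alpha m$.
\end{itemize}
This yields $\sum_d V_d\leq\alpha\ell m^2$.

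\emph{The second moment \eqref{eq:v-second-moment-nonzero}.} First sum over all $d$, including $d=0$, and then remove the $d=0$ term. Squaring \eqref{eq:v-identity} gives
\[
\sum_{d\in\Z_m}V_d^2=\sum_{\abs{h},\abs{k}<\ell}(\ell-\abs{h})(\ell-\abs{k})\sum_{d}R(hd)R(kd).
\]
Split the terms as follows.
\begin{itemize}
\item If exactly one of $h,k$ is zero, say $h=0\neq k$, the inner sum is $R(0)\sum_d R(kd)=0$ by the computation above.
\item The term $h=k=0$ equals $\ell^2mR(0)^2\geq0$.
\item All terms with $h,k\neq0$ are nonnegative by Lemma~\ref{lem:pos}, so we may discard every such term except the diagonal $h=k\neq0$.
\item On the diagonal, $\sum_d R(hd)^2=\sum_t R(t)^2=\mathcal E(f)$, again because $h$ is invertible.
\end{itemize}
Therefore
\[
\sum_{d\in\Z_m}V_d^2\geq\sum_{0<\abs{h}<\ell}(\ell-\abs{h})^2\,\mathcal E(f)\gtrsim\ell^3\beta m^3.
\]
It remains to subtract the $d=0$ contribution. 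Since $S_0(x)=\ell f(x)$, we get $V_0=\ell^2R(0)\leq\ell^2\alpha m$, and hence $V_0^2\leq\ell^4\alpha^2m^2$. This is at most half of the lower bound $c\beta\ell^3m^3$ exactly when $\ell\lesssim\beta m/\alpha^2$, which is the hypothesis; with a suitably small implied constant the subtraction costs at most a factor of $2$.

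The only delicate step is this bookkeeping in the second moment. We must check that the mixed terms with $h=0$ vanish rather than go negative, and that the $d=0$ term is dominated. The first is handled by the mean-zero property; the second is where the restriction $\ell\lesssim\beta m/\alpha^2$ enters.
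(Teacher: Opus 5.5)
Your proposal is correct and follows the paper's proof essentially step for step: the identity \eqref{eq:v-identity} comes from expanding the square, the first moment from invertibility of $h$ together with $\sum_x f(x)=0$, and the second moment from keeping only the diagonal $h=k\neq0$ via Lemma~\ref{lem:pos} and then removing the $d=0$ term using $\ell\lesssim\beta m/\alpha^2$. One minor point you share with the paper is that $\sum_{0<\abs{h}<\ell}(\ell-\abs{h})^2\gtrsim\ell^3$ needs $\ell\geq2$; for $\ell=1$ the $h=k=0$ term you recorded already suffices, since $\abs{R(t)}\leq R(0)$ gives $mR(0)^2\geq\mathcal E(f)$.
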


\begin{proof}
We first prove \eqref{eq:v-identity}. Expanding the square gives
\[
V_d = \sum_{x\in\mathbb Z_m}\left(\sum_{i=0}^{\ell-1}f(x+id)\right)^2
 = \sum_{0\leq i,j<\ell}\sum_{x\in\mathbb Z_m}f(x+id)f(x+jd)  \]

 \[= \sum_{0\leq i,j<\ell}\sum_{y\in\mathbb Z_m}f(y)f(y+(j-i)d)
 = \sum_{0\leq i,j<\ell} R((j-i)d) = \sum_{|h|<\ell}(\ell-|h|)R(hd).
\]
Here we used the change of variables \(y=x+id\) and then used that there are \(\ell-|h|\) pairs \((i,j)\)  satisfy \(0\leq i<\ell\) and
\(0\leq i+h<\ell\) such that \(h=j-i\). This proves \eqref{eq:v-identity}.

Now we sum \eqref{eq:v-identity} over $d\in\Z_m$. The term $h=0$ contributes
$\ell mR(0)$. If $h\neq0$, then $d\mapsto hd$ is a bijection on $\Z_m$, so
$$
\sum_{d\in\Z_m}R(hd)=\sum_{t\in\Z_m}R(t)=\left(\sum_{x\in\Z_m}f(x)\right)^2=0.
$$
Therefore $\sum_{d\in\Z_m}V_d=\ell mR(0)$. Since
$R(0)=\sum_{x\in\Z_m}f(x)^2\leq \alpha m$, this proves
\eqref{eq:v-first-moment}.

It remains to prove \eqref{eq:v-second-moment-nonzero}. Squaring
\eqref{eq:v-identity} and summing over $d\in\Z_m$ gives
$$
\sum_{d\in\Z_m}V_d^2=
\sum_{\abs{h},\abs{k}<\ell}(\ell-\abs{h})(\ell-\abs{k})
\sum_{d\in\Z_m}R(hd)R(kd).
$$
By Lemma~\ref{lem:pos}, every term with $h,k\neq0$ is nonnegative. Terms with
exactly one of $h,k$ equal to $0$ vanish by the preceding discussion. Thus we
may obtain a lower bound by considering just the diagonal terms $h=k\neq0$. For $h \neq 0$, multiplication by
$h$ is a bijection on $\Z_m$, so $\sum_{d\in\Z_m}R(hd)^2=\mathcal E(f)$. Hence
$$
\sum_{d\in\Z_m}V_d^2\geq \sum_{0<\abs{h}<\ell}(\ell-\abs{h})^2\mathcal E(f)
\gtrsim \ell^3\mathcal E(f)\geq \beta\ell^3m^3.
$$
This still includes the zero step. For $d=0$, we have $S_0(x)=\ell f(x)$, and
therefore $V_0=\ell^2\sum_{x\in\Z_m}f(x)^2\leq \alpha\ell^2m$, so
$V_0^2\leq \alpha^2\ell^4m^2$. The assumption
$\ell\lesssim \beta m/\alpha^2$, with the implicit constant chosen small enough,
makes this smaller than a fixed fraction of $\beta\ell^3m^3$. Removing the zero
step gives \eqref{eq:v-second-moment-nonzero}.
\end{proof}

\begin{lemma}[Dense modular progression]\label{lem:modular}
Assume $\mathcal E(f)\geq\beta m^3$, with $0<\beta\lesssim\alpha^2$, and choose
an integer $\ell$ satisfying
$$
\ell\sim \frac{\beta m}{\alpha^2},\qquad 1\leq \ell<m/2,
$$
where the implicit constant is chosen small enough. Then there exist
$d\in\Z_m^\times$ and $x\in\Z_m$ such that
\begin{equation}\label{eq:modular-increment}
S_d(x)=\sum_{i=0}^{\ell-1} f(x+id)\gtrsim \frac{\beta}{\alpha}\ell.
\end{equation}
\end{lemma}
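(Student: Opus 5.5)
The plan is to find a single nonzero step $d$ for which $V_d$ is unusually large, using the first and second moments from Lemma~\ref{lem:v-identity}. Then, for that fixed $d$, I would use the mean-zero property of $S_d$ together with the lower bound $f\geq-\alpha$ to show that a large second moment forces a large positive value of $S_d(x)$.

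\emph{Step 1: a step $d$ with large $V_d$.} Since $\ell\sim\beta m/\alpha^2$ with a small implicit constant, the hypothesis $\ell\lesssim\beta m/\alpha^2$ of Lemma~\ref{lem:v-identity} holds. That lemma then gives $\sum_{d\in\Z_m^\times}V_d^2\gtrsim\beta\ell^3m^3$. Each $V_d$ is nonnegative, so \eqref{eq:v-first-moment} also gives $\sum_{d\in\Z_m^\times}V_d\leq\alpha\ell m^2$. Combining these,
\[
\max_{d\in\Z_m^\times}V_d\;\geq\;\frac{\sum_{d\neq0}V_d^2}{\sum_{d\neq0}V_d}\;\gtrsim\;\frac{\beta\ell^2 m}{\alpha}.
\]
Fix a $d\in\Z_m^\times$ attaining this bound.

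\emph{Step 2: from large variance to a large positive value.} For this $d$, the sums $S_d$ have total $\sum_x S_d(x)=\ell\sum_x f(x)=0$. Since $f\geq-\alpha$, every value satisfies $S_d(x)\geq-\alpha\ell$. Let $M=\max_x S_d(x)$ and let $P=\sum_x S_d(x)^+$, which also equals $\sum_x S_d(x)^-$.
\begin{itemize}
\item On the points where $S_d<0$ we have $S_d^2\leq\alpha\ell\,\abs{S_d}$, so these points contribute at most $\alpha\ell P$ to $V_d$.
\item On the points where $S_d>0$ we have $S_d^2\leq M S_d$, so these points contribute at most $MP$.
\item Finally $P\leq Mm$.
\end{itemize}
Therefore $V_d\leq(\alpha\ell+M)Mm$.

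\emph{Step 3: conclusion by contradiction.} Suppose $M\leq c\beta\ell/\alpha$ for a small constant $c$. Since $\beta\lesssim\alpha^2$, this gives $M\lesssim\alpha\ell$, and hence $V_d\lesssim\alpha\ell Mm\leq c\beta\ell^2m$. This contradicts $V_d\gtrsim\beta\ell^2m/\alpha$ once $c$ is small, even with room to spare because $\alpha\leq1$. So $M\gtrsim\beta\ell/\alpha$, which is exactly \eqref{eq:modular-increment}.

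The only delicate point is Step 2. There one has to convert an $L^2$ lower bound into a one-sided $L^\infty$ lower bound, and this works only because the negative part of $f$ is bounded by $\alpha$ and $S_d$ has mean zero. The remaining steps are bookkeeping with the moment bounds already established.
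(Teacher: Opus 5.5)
Your proof is correct. Step~1 is exactly the paper's argument: you bound $\sum_{d\neq0}V_d^2$ by $\max_{d\neq0}V_d\cdot\sum_dV_d$.

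The two proofs differ in how they convert $V_d\gtrsim\beta\ell^2m/\alpha$ into a large positive value of $S_d$. The paper uses only the two-sided bound $\abs{S_d}\le\ell$. This gives $\sum_x\abs{S_d(x)}\ge V_d/\ell\gtrsim(\beta/\alpha)\ell m$. Mean zero makes half of this positive mass, and averaging over the $m$ values of $x$ finishes. So your closing remark that the step works ``only because'' $f\ge-\alpha$ is not right: $\abs{f}\le1$ and mean zero already suffice for the stated bound, and the paper's route is shorter.

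Your route bounds the negative part of $V_d$ by $\alpha\ell P$ rather than $\ell P$. This buys something that you then discard. From $V_d\le(\alpha\ell+M)Mm$, either $M>\alpha\ell$ or $M\ge V_d/(2\alpha\ell m)\gtrsim\beta\ell/\alpha^2$. So in fact $M\gtrsim\min(\alpha,\beta/\alpha^2)\,\ell$. When $\beta\lesssim\alpha^3$, as in the application where $\beta\sim\alpha^{12}$, this beats the lemma by a factor $\alpha^{-1}$. Run through Proposition~\ref{prop:increment} and the same iteration, it gives an increment $\gtrsim\alpha^{10}$ rather than $\alpha^{11}$. That yields the exponent $1/10$ instead of $1/11$ in Corollary~\ref{cor:quantitative}.
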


\begin{proof}
Set $M=\max_{d\in\Z_m^\times}V_d$. By \eqref{eq:v-second-moment-nonzero} and
\eqref{eq:v-first-moment},
$$
\beta\ell^3m^3\lesssim \sum_{d\in\Z_m^\times}V_d^2
\leq M\sum_{d\in\Z_m}V_d\leq M\alpha\ell m^2.
$$
Thus there is a nonzero step $d$ with
\begin{equation}\label{eq:large-vd}
V_d\gtrsim \frac{\beta}{\alpha}\ell^2m.
\end{equation}

We now extract a large positive value of $S_d(x)$. Since $\abs{S_d(x)}\leq\ell$
for every $x\in\Z_m$, the definition of $V_d$ gives
$V_d\leq \ell\sum_{x\in\Z_m}\abs{S_d(x)}$. Using \eqref{eq:large-vd}, we get
$$
\sum_{x\in\Z_m}\abs{S_d(x)}\gtrsim \frac{\beta}{\alpha}\ell m.
$$
Also $\sum_{x\in\Z_m}S_d(x)=\ell\sum_{x\in\Z_m}f(x)=0$. Therefore the total
positive mass of $S_d$ is exactly half of $\sum_{x\in\Z_m}\abs{S_d(x)}$, and
hence some $x\in\Z_m$ satisfies \eqref{eq:modular-increment}.
\end{proof}

%% -------------------------------------------------------------------
\section{Finding a genuine integer progression}
%% -------------------------------------------------------------------

We now show how to pass from a density increment on a modular progression to
one on a genuine integer progression.

\begin{lemma}[Finding a genuine integer progression]\label{lem:rect}
Suppose $d\in\Z_m^\times$ and $x\in\Z_m$ satisfy
$\sum_{i=0}^{\ell-1}f(x+id)\geq\eta\ell$ for some $0<\eta\leq1$. If
$\ell\gtrsim\eta^{-2}$, then there is an arithmetic progression $P\subset[N]$
such that
$$
\abs{P}\gtrsim\eta\sqrt{\ell}
\qquad\text{and}\qquad
\frac{\abs{A\cap P}}{\abs{P}}\geq\alpha+c\eta
$$
for an absolute constant $c>0$.
\end{lemma}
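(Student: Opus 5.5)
The plan is to use Dirichlet's approximation theorem to find a multiple $qd$ of the step that is small as an integer. We then cut the modular progression into residue classes modulo $q$, and cut those further into genuine integer progressions. A pigeonhole argument, after discarding short pieces, then produces the required $P$.

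\textbf{Step 1 (Dirichlet).} Put $Q=\lfloor\sqrt{\ell}\rfloor$. By Dirichlet's theorem applied to $d/m$, there is an integer $1\leq q\leq Q$ such that the representative $e\in(-m/2,m/2]$ of $qd$ satisfies $\abs{e}\leq m/Q$. Since $m$ is prime, $d\neq0$ and $\ell<m/2$, the points $x+id$ for $0\leq i<\ell$ are distinct.

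\textbf{Step 2 (splitting).} Write $i=j+qk$ with $0\leq j<q$. Class $j$ is the modular progression $x+jd+ke$ for $0\leq k<\ell_j$, where $\ell_j\leq\ell/q+1$.
\begin{itemize}
\item Take representatives $y_k\in[0,m)$. As long as no reduction mod $m$ is needed, $y_{k+1}=y_k+e$ as integers, so each maximal run without reduction is a genuine integer progression in $[0,m)$ with step $e$.
\item Each reduction requires the integer walk to travel distance about $m$, so the number of breaks in class $j$ is at most $\ell_j\abs{e}/m+1\leq \ell/(qQ)+2$.
\item Summing over the $q$ classes, the total number of pieces is at most $\ell/Q+2q+\dots\lesssim\sqrt{\ell}$.
\item Intersect each piece with $[N]$. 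An integer progression inside $[0,m)$ meets the interval $[N]$ in a contiguous sub-progression, so we still have $\lesssim\sqrt{\ell}$ pieces $P_1,\dots,P_s$, each a genuine arithmetic progression in $[N]$.
\item Since $f$ vanishes off $[N]$, we get $\sum_{t}\sum_{y\in P_t}f(y)=S_d(x)\geq\eta\ell$, and $\sum_t\abs{P_t}\leq\ell$.
\end{itemize}

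\textbf{Step 3 (pigeonhole).} Call a piece short if $\abs{P_t}<c_0\eta\sqrt{\ell}$, with $c_0$ a small absolute constant. Since $\abs{f}\leq1$ and there are $\lesssim\sqrt{\ell}$ pieces, the short pieces contribute at most $c_0C\eta\ell\leq\eta\ell/2$ in absolute value. Hence the long pieces satisfy $\sum_{\text{long}}\sum_{P_t}f\geq\eta\ell/2\geq(\eta/2)\sum_{\text{long}}\abs{P_t}$. So some long piece $P$ has $\sum_{y\in P}f(y)\geq(\eta/2)\abs{P}$. Because $f=\one_A-\alpha$ on $[N]$, this says $\abs{A\cap P}/\abs{P}\geq\alpha+\eta/2$, while $\abs{P}\geq c_0\eta\sqrt{\ell}$. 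The hypothesis $\ell\gtrsim\eta^{-2}$ guarantees $c_0\eta\sqrt{\ell}\geq1$, so the bound is nontrivial and $Q\geq1$.

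\textbf{Main obstacle.} The main delicate step is the break count in Step 2. The residue classes have length $\sim\ell/q$ and step of size up to $m/Q$, and one needs the classes and the wrap-around breaks to together contribute only $O(\sqrt{\ell})$ pieces. The choice $Q\sim\sqrt{\ell}$ balances these two contributions, giving $\ell/Q+2Q\lesssim\sqrt{\ell}$. That balance is exactly what yields the loss of only a single factor of $\eta$ in $\abs{P}\gtrsim\eta\sqrt{\ell}$.
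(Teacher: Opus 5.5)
Your proof is correct. It shares its first half with the paper's: both use pigeonhole/Dirichlet to find $q\le\sqrt\ell$ with $qd$ represented by an integer of size $\lesssim m/\sqrt\ell$, and both split into the $q$ residue classes. Where you differ is in handling wrap-around and the passage to $[N]$.

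The paper cuts each class into blocks of a fixed length $K\sim\sqrt\ell$, chosen so that each block travels distance $<m/10$ on the integer line. It pigeonholes \emph{first*} to get a single block $\widetilde P$ with average at least $\eta$. It then uses that copies of $[N]$ modulo $m$ are separated by gaps of length $m-N>3m/4$ (this is where $m>4N$ enters) to see that $\widetilde P\cap[N]$ is one consecutive piece $P$. The length comes for free from $f\le1$: $\abs{P}\ge\sum_{P}f\ge\eta\abs{\widetilde P}$.

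You instead cut at the wrap-around points. Every piece is then an honest integer progression inside $[0,m)$, so contiguity of its intersection with $[N]$ is automatic and no gap condition beyond $N\le m$ is needed. The price is that the pieces have uncontrolled lengths. You pay for this with the break count showing there are only $O(\sqrt\ell)$ pieces, and by discarding short pieces \emph{before} pigeonholing, which costs a factor $2$ in the increment ($\eta/2$ rather than the paper's $\eta$). In exchange, your argument gives a slightly stronger structural statement: the whole modular progression, restricted to $[N]$, partitions into $O(\sqrt\ell)$ genuine integer progressions, not just one good block.
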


\begin{proof}
We find a short subprogression of the original progression whose step is
represented by a small integer, then intersect it with $[N]$.

Apply the pigeonhole principle to the $\lfloor\sqrt{\ell}\rfloor+1$ elements
$0,d,2d,\ldots,\lfloor\sqrt{\ell}\rfloor d$ in $\Z_m$: two of these have a
difference represented by an integer of size $\lesssim m/\sqrt{\ell}$. Hence
there is an integer $q$, with $1\le q\le\sqrt{\ell}$, such that $qd$ is
represented by an integer $s$ with $\abs{s}\lesssim m/\sqrt{\ell}$. Since
$m\sim N$, after enlarging the absolute constant in $\ell\gtrsim\eta^{-2}$, we
may assume $\abs{s}<N$.

Taking every $q$th term of the original progression gives $q$ subprogressions,
each with step represented by $s$. Divide each of these into consecutive progressions
of length $K\sim\sqrt{\ell}$, choosing the implicit constant in $K$ small
enough that each subprogression has length at least $K$ and every progression travels
distance $<m/10$ before reduction modulo $m$. Merging any incomplete final
progression into its predecessor, we may assume every progression has length
$\sim\sqrt{\ell}$. These progressions still partition the original $\ell$ terms.
Since the original average of $f$ is at least $\eta$, one block $\widetilde P$
satisfies
\[
    \sum_{u\in\widetilde P} f(u)\geq \eta\abs{\widetilde P}.
\]

It remains to intersect $\widetilde P$ with $[N]$. On the integer line, the
copies of $[N]$ modulo $m$ are separated by gaps of length $m-N>3m/4$, while
$\widetilde P$ travels distance $<m/10$. Thus $\widetilde P$ cannot meet
$[N]$, leave $[N]$, and later meet $[N]$ again. Hence the terms of
$\widetilde P$ lying in $[N]$ form one consecutive subprogression; call it
$P$. Since $f=0$ outside $[N]$, passing from $\widetilde P$ to $P$ does not
change the sum of $f$. As $f\leq1$, we get
$\abs{P}\geq\eta\abs{\widetilde P}\gtrsim\eta\sqrt{\ell}$, and the average of
$f$ on $P$ is at least $\eta$. Finally, $P$ is an ordinary integer arithmetic
progression: all its terms lie in $[N]$, and its step is represented by the
integer $s$ with $\abs{s}<N$. Since $f=\one_A-\alpha$ on $[N]$,
\[
    \frac{\abs{A\cap P}}{\abs{P}}
    =\alpha+\frac{1}{\abs{P}}\sum_{u\in P}f(u)
    \geq \alpha+\eta.
\]
This completes the proof.
\end{proof}

Combining this with Lemma~\ref{lem:energy} gives the following density increment.

\begin{proposition}[Density increment]\label{prop:increment}
Let $A\subset[N]$ have density $\alpha$, and suppose that $A$ contains no
nontrivial three-term arithmetic progression. If $N\gtrsim\alpha^{-32}$, then
there is an arithmetic progression $P\subset[N]$ such that
$$
\abs{P}\gtrsim\alpha^{16}N^{1/2}
\qquad\text{and}\qquad
\frac{\abs{A\cap P}}{\abs{P}}\geq\alpha+c\alpha^{11}.
$$
\end{proposition}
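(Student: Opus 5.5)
We chain Lemma~\ref{lem:energy}, Lemma~\ref{lem:modular} and Lemma~\ref{lem:rect}, tracking the powers of $\alpha$. First, since $N\gtrsim\alpha^{-32}\geq\alpha^{-3}$, Lemma~\ref{lem:energy} gives $\mathcal E(f)\gtrsim\alpha^{12}m^3$. We therefore set $\beta=c_0\alpha^{12}$ for a small absolute constant $c_0$, so that $\mathcal E(f)\geq\beta m^3$. Since $\alpha\leq1$, the requirement $0<\beta\lesssim\alpha^2$ of Lemma~\ref{lem:modular} holds.

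Next, we choose an integer $\ell\sim\beta m/\alpha^2\sim\alpha^{10}m$, with the small implicit constant demanded by Lemma~\ref{lem:modular}. We must check $1\leq\ell<m/2$: the upper bound holds once the constant is small, and the lower bound follows from $m\sim N\gtrsim\alpha^{-32}$, which gives $\alpha^{10}m\gtrsim\alpha^{-22}\geq1$. Lemma~\ref{lem:modular} then produces $d\in\Z_m^\times$ and $x\in\Z_m$ with $S_d(x)\gtrsim(\beta/\alpha)\ell\sim\alpha^{11}\ell$. We set $\eta=c_1\alpha^{11}\leq1$, so that $S_d(x)\geq\eta\ell$.

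Finally, we apply Lemma~\ref{lem:rect}. Its hypothesis $\ell\gtrsim\eta^{-2}\sim\alpha^{-22}$ needs $\alpha^{10}N\gtrsim\alpha^{-22}$, that is, $N\gtrsim\alpha^{-32}$. This is exactly the assumed threshold, and it is the binding constraint that explains the exponent $32$. The lemma yields a progression $P\subset[N]$ with density at least $\alpha+c\eta=\alpha+c'\alpha^{11}$ and
\[
\abs{P}\gtrsim\eta\sqrt{\ell}\sim\alpha^{11}\cdot\alpha^{5}m^{1/2}\sim\alpha^{16}N^{1/2},
\]
as claimed.

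There is no genuinely hard step; the argument is pure bookkeeping. The main care needed is in the ordering of the absolute constants. First $c_0$ is fixed by Lemma~\ref{lem:energy}. Then the constant in $\ell$ is chosen small enough for Lemmas~\ref{lem:v-identity} and~\ref{lem:modular}. Finally, the implicit constant in $N\gtrsim\alpha^{-32}$ is enlarged so that all the lower-bound requirements hold simultaneously: $N\gtrsim\alpha^{-3}$, $\ell\geq1$, $\ell\gtrsim\eta^{-2}$, and the condition $\abs{s}<N$ inside Lemma~\ref{lem:rect}.
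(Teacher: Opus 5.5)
Your proposal is correct and is essentially the paper's own proof. You chain Lemma~\ref{lem:energy}, Lemma~\ref{lem:modular} and Lemma~\ref{lem:rect} with $\beta\sim\alpha^{12}$, $\ell\sim\alpha^{10}m$ and $\eta\sim\alpha^{11}$, using $N\gtrsim\alpha^{-32}$ to guarantee both $1\leq\ell<m/2$ and $\ell\gtrsim\eta^{-2}$, and your explicit ordering of the absolute constants spells out bookkeeping the paper leaves implicit.
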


\begin{proof}
By Lemma~\ref{lem:energy}, $\mathcal E(f)\gtrsim\alpha^{12}m^3$. Put
$\beta\sim\alpha^{12}$ and choose
$\ell\sim\beta m/\alpha^2\sim\alpha^{10}m$. The hypothesis
$N\gtrsim\alpha^{-32}$ ensures that such an integer $\ell$ satisfies
$1\leq\ell<m/2$. Lemma~\ref{lem:modular} gives $d\in\Z_m^\times$ and
$x\in\Z_m$ with $\sum_{i=0}^{\ell-1}f(x+id)\gtrsim \alpha^{11}\ell$. Set
$\eta\sim\alpha^{11}$. Again using $N\gtrsim\alpha^{-32}$, we have
$\ell\gtrsim\eta^{-2}$. Lemma~\ref{lem:rect} gives an arithmetic progression
$P\subset[N]$ with density increment $\gtrsim\alpha^{11}$ and length
$$
\abs{P}\gtrsim \eta\sqrt{\ell}\gtrsim \alpha^{11}\alpha^5m^{1/2}
\gtrsim \alpha^{16}N^{1/2}.
$$
\end{proof}

%% -------------------------------------------------------------------
\section{Proof of Roth's theorem and the quantitative bound}
%% -------------------------------------------------------------------

\begin{proof}[Proof of Theorem~\ref{thm:roth}]
Fix $\alpha_0>0$ and suppose, for contradiction, that there are arbitrarily
large $N$ and 3AP-free sets $A\subset[N]$ with $\abs{A}\geq\alpha_0N$.
Applying Proposition~\ref{prop:increment}, then rescaling the progression it
gives back to an interval, produces a new 3AP-free set with density increased
by at least $c\alpha_0^{11}$. Repeating this, the density exceeds $1$ after
$O(\alpha_0^{-11})$ steps.

It remains only to ensure that the iteration can be run for that many steps.
At each step the ambient length is replaced by a progression of length at least
$c\alpha_0^{16}N_j^{1/2}$. Since the number of steps depends only on
$\alpha_0$, choosing the original $N$ sufficiently large in terms of $\alpha_0$
keeps every intermediate interval above the threshold in
Proposition~\ref{prop:increment}. This contradiction proves the theorem.
\end{proof}

A more careful analysis  gives an explicit double-logarithmic bound.

\begin{corollary}\label{cor:quantitative}
There is an absolute constant $C>0$ such that
$$
        r_3(N)\leq C\frac{N}{(\log\log N)^{1/11}}
$$
for all sufficiently large $N$.
\end{corollary}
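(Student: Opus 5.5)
We iterate Proposition~\ref{prop:increment} quantitatively and track two quantities, the density $\alpha_j$ and the ambient length $N_j$. Suppose $A\subset[N]$ is 3AP-free with $\abs{A}=\alpha N$, and assume $\alpha\geq C(\log\log N)^{-1/11}$ for a large constant $C$. We aim for a contradiction. Set $\alpha_0=\alpha$ and $N_0=N$. At stage $j$ we have a 3AP-free set of density $\alpha_j\geq\alpha$ in $[N_j]$, since rescaling a progression to an interval preserves 3AP-freeness. As long as $N_j\geq C'\alpha_j^{-32}$, Proposition~\ref{prop:increment} produces
\[
\alpha_{j+1}\geq\alpha_j+c\alpha_j^{11}\geq\alpha_j+c\alpha^{11},
\qquad N_{j+1}\geq c\alpha^{16}N_j^{1/2}.
\]

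First we count steps. Densities never exceed $1$, so the iteration must stop within $J\leq c^{-1}\alpha^{-11}$ steps. One can do slightly better by doubling-time bookkeeping: the density doubles within $O(\alpha_j^{-10})$ steps, giving $J\lesssim\alpha^{-10}$ total. But $\alpha^{-11}$ suffices for the stated exponent, so we keep the crude bound.

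Next we control the length. Put $L_j=\log N_j$, so that $L_{j+1}\geq \tfrac12 L_j-16\log(1/\alpha)-O(1)$. Unwinding this gives
\[
L_j\geq 2^{-j}L_0-2\bigl(16\log(1/\alpha)+O(1)\bigr),
\]
because the geometric series of the error terms sums to at most twice one term. The threshold condition $N_j\geq C'\alpha^{-32}$, i.e.\ $L_j\geq 32\log(1/\alpha)+O(1)$, is therefore guaranteed for all $j\leq J$ provided
\[
2^{-J}\log N\geq 64\log(1/\alpha)+O(1).
\]
With $J\lesssim\alpha^{-11}$, this reads $\log\log N\geq C''\alpha^{-11}+\log\log(1/\alpha)+O(1)$. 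That holds when $\alpha\geq C(\log\log N)^{-1/11}$ with $C$ large, since the $\log\log(1/\alpha)$ term is negligible compared to $\alpha^{-11}$.

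So the iteration can run for more than $c^{-1}\alpha^{-11}$ steps, and the density would then exceed $1$, which is the contradiction. Hence every 3AP-free set satisfies $\abs{A}\leq CN(\log\log N)^{-1/11}$.

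The main obstacle is the bookkeeping in the recursion for $L_j$. Since $N_j$ is square-rooted at each step, the loss compounds as $2^{-J}$, and this is exactly what turns $\alpha^{-11}$ steps into a double-logarithmic bound. We must make sure the additive losses $\log(1/\alpha)$ do not accumulate worse than geometrically, and the halving recursion handles this. We also need each $N_j$ to stay large enough for Bertrand and the hypotheses of Lemma~\ref{lem:rect}; both are absorbed into the same threshold condition.
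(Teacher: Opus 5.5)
Your argument is correct and follows the paper's proof essentially line for line. You bound the number of increments by $O(\alpha^{-11})$, unwind $\log N_{j+1}\geq\frac12\log N_j-O(\log(e/\alpha))$ into $\log N_j\geq 2^{-j}\log N-O(\log(e/\alpha))$, and then require $2^{J}\log(e/\alpha)\lesssim\log N$, i.e.\ $\log\log N\gtrsim\alpha^{-11}$. Your step count $J\leq c^{-1}\alpha^{-11}$ is stated the right way round (the paper's ``$c_0$ sufficiently small'' should read ``large''), and your dyadic remark giving $J\lesssim\alpha^{-10}$, hence exponent $1/10$, is also valid.
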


\begin{proof}
Let $A\subset[N]$ be 3AP-free, and write $\alpha=\abs{A}/N$. Set
$J=\lfloor c_0\alpha^{-11}\rfloor$, where $c_0>0$ is a sufficiently small
absolute constant. If the density increment can be applied $J$ times, then each
step increases the density by at least $c\alpha^{11}$, since the densities only
increase. After $J$ steps the density would exceed $1$, which is impossible.

It remains to check that the lengths stay large enough for $J$ steps. At each
step, after rescaling the subprogression back to an interval,
Proposition~\ref{prop:increment} gives
\[
        \alpha_{j+1}\geq \alpha_j+c\alpha_j^{11},
        \qquad
        N_{j+1}\geq c\alpha_j^{16}N_j^{1/2}.
\]
Since $\alpha_j\geq\alpha$, the length estimate implies
\[
        \log N_{j+1}\geq \frac12\log N_j-C\log(e/\alpha).
\]
Iterating this crude bound gives
\[
        \log N_j\geq 2^{-j}\log N-C\log(e/\alpha).
\]
Thus, if $\log N\geq C2^J\log(e/\alpha)$, then every $N_j$ with $j\leq J$
remains larger than a fixed power of $\alpha^{-1}$, and in particular remains
above the threshold required by Proposition~\ref{prop:increment}. Since
$J\sim\alpha^{-11}$, this condition follows from
\[
        \log\log N\geq C\alpha^{-11},
\]
after increasing $C$.

Therefore, if $\log\log N$ were larger than a sufficiently large constant times
$\alpha^{-11}$, the iteration could run for $J$ steps and force the density
above $1$. Hence $\log\log N\lesssim\alpha^{-11}$, so
$\alpha\lesssim(\log\log N)^{-1/11}$. Since $\alpha=\abs{A}/N$, this gives
\[
        r_3(N)\lesssim \frac{N}{(\log\log N)^{1/11}}.
\]
This proves the corollary.
\end{proof}

%% -------------------------------------------------------------------
\section{Further remarks}
%% -------------------------------------------------------------------

Although the proof above is written without Fourier analysis, several of the steps have a simple Fourier interpretation. Let $e(x)=e^{2\pi i x/m}$ and
$\widehat f(\xi)=\sum_{x\in\Z_m} f(x)e(-\xi x)$. One then has
$\widehat R(\xi)=\abs{\widehat f(\xi)}^2$. Hence Parseval gives
$$
\mathcal E(f)=\sum_{t\in\Z_m}R(t)^2=
\frac1m\sum_{\xi\in\Z_m}\abs{\widehat f(\xi)}^4.
$$
Thus, up to the normalization convention, $\mathcal E(f)$ is the fourth power
of what is called in the modern Gowers-style literature the $U^2$-norm. In the
usual modern Fourier-analytic proofs of Roth's theorem, the implication from a small
$3$-term progression count to a large $U^2$-norm is standard.

The key non-negativity statement in Lemma~\ref{lem:pos} also has a short Fourier
explanation. If $h,k\in\Z_m^\times$ and $\lambda=kh^{-1}$, then
\begin{equation}\label{eq:fourier-positivity}
\sum_{d\in\Z_m}R(hd)R(kd)
=\sum_{t\in\Z_m}R(t)R(\lambda t)
=\frac1m\sum_{\xi\in\Z_m}
\abs{\widehat f(\xi)}^2
\abs{\widehat f(-\lambda^{-1}\xi)}^2
\geq 0.
\end{equation}
As future research directions, it would be interesting to develop Fourier-free analogues of the following
arguments:
\begin{enumerate}
    \item a proof of a polylogarithmic Roth bound of the form $r(n)\lesssim n(\log n)^{-c}$ for $c>0$;
    \item the energy-increment proof of Roth's theorem;
    \item Gowers' argument \cite{Gowers98} for $4$-term arithmetic progressions.
\end{enumerate}


\begin{thebibliography}{DHJ}

\bibitem{DHJ}
D.~H.~J. Polymath,
\emph{A new proof of the density Hales--Jewett theorem},
Ann. of Math.\ (2) \textbf{175} (2012), no.~3, 1283--1327.

\bibitem{Furstenberg77}
H.~Furstenberg,
\emph{Ergodic behavior of diagonal measures and a theorem of Szemer\'edi on
arithmetic progressions},
J. Anal. Math.\ \textbf{31} (1977), 204--256.

\bibitem{Gowers98}
W.~T. Gowers,
\emph{A new proof of {Szemerédi's} theorem for arithmetic progressions of length four},
Geom. Funct. Anal.\ \textbf{8} (1998), no.~3, 529--551.

\bibitem{GRS}
R.~Graham, B.~Rothschild, and J.~Spencer,
\emph{Ramsey theory},
Wiley, New York, 1990.

\bibitem{Green}
B.~Green,
\emph{Arithmetic progressions at the Journal of the LMS},
J. Lond. Math. Soc. (2) \textbf{113} (2026), no.~3, Paper No.~e70483.

\bibitem{P}
S.~Peluse,
\emph{Recent progress on bounds for sets with no three terms in arithmetic
progression},
Ast\'erisque \textbf{438} (2022), Exp. No.~1196, 547--581.

\bibitem{RS78}
I.~Z. Ruzsa and E.~Szemer\'edi,
\emph{Triple systems with no six points carrying three triangles},
Combinatorics (Keszthely, 1976), Colloq. Math. Soc. J\'anos Bolyai \textbf{18},
North-Holland, 1978, pp.~939--945.

\bibitem{Roth53}
K.~F. Roth,
\emph{On certain sets of integers},
J. London Math. Soc.\ \textbf{28} (1953), 104--109.

\bibitem{Szemeredi75}
E.~Szemer\'edi,
\emph{On sets of integers containing no $k$ elements in arithmetic progression},
Acta Arith.\ \textbf{27} (1975), 199--245.

\end{thebibliography}
\end{document}